\newcommand{\R}{\mathbb{R}}
\newcommand{\PP}{\mathbb{P}}
\newcommand{\C}{\mathbb{C}}
\newcommand{\Q}{\mathbb{Q}}
\newcommand{\A}{\mathcal{A}}
\newcommand{\Z}{\mathbb{Z}}
\renewcommand{\a}{\mathfrak{a}}
\renewcommand{\b}{\mathfrak{b}}
\renewcommand{\H}{\mathcal{H}}
\renewcommand{\le}{\leqslant}
\renewcommand{\ge}{\geqslant}
\newcommand{\res}{\mathrm{Res }}
\newcommand{\vol}{\mathrm{Vol}}
\newcommand{\EE}{\mathcal{E}}
\newcommand{\F}{\mathcal{F}}
\newcommand{\GL}{\mathrm{GL}}
\newcommand{\re}{\mathrm{Re}}\newcommand{\im}{\mathrm{Im}}
\newcommand{\tr}{\mathrm{Tr}}
\newcommand{\SL}{\mathrm{SL}}
\newcommand{\PSL}{\mathrm{PSL}}
\newcommand{\D}{\mathcal{D}}
\renewcommand{\SS}{\mathcal{S}}
\newcommand{\ov}[1]{\overline{#1}}
\newcommand{\og}{\Gamma}
\renewcommand{\phi}{\varphi}
\newcommand{\comment}[1]{}
\theoremstyle{plain}
\newtheorem{theorem}{Theorem}[section]
\newtheorem{corollary}[theorem]{Corollary}
\newtheorem{prop}[theorem]{Proposition}
\newtheorem{lemma}[theorem]{Lemma}
\numberwithin{equation}{section}
\theoremstyle{definition}
\title{On the Petersson scalar product of arbitrary modular forms}
\author{Vicentiu Pasol, Alexandru A. Popa}
\address{Institute of Mathematics ``Simion Stoilow" of the Romanian Academy,
P.O. Box 1-764, RO-014700 Bucharest, Romania}
\address{E-mail: vicentiu.pasol@imar.ro}
\address{E-mail: alexandru.popa@imar.ro}
\subjclass{11F11}
\begin{document}

\begin{abstract}We consider a natural extension of the Petersson scalar product to the entire
space of modular forms of integral weight $k\ge 2$ for a finite index subgroup of the modular
group. We show that Hecke operators have the same adjoints with respect to this inner
product as for cusp forms, and we show that the Petersson product is nondegenerate for
$\Gamma_1(N)$ and $k>2$. For $k=2$ we give examples when it is degenerate, and when it is
nondegenerate. 

\end{abstract}

\maketitle

\section{Introduction}

Let $\Gamma$ be a finite index subgroup of $\SL_2(\Z)$,
let $k\ge 2$ be an integer, and denote by $M_k(\Gamma)$, $S_k(\Gamma)$ the spaces of modular
forms, respectively cusp forms of weight $k$ for $\Gamma$. For $f,g\in M_k(\Gamma)$, at least
one of which is a
cusp form, the
Petersson scalar product is defined by
\[(f,g)=\frac{1}{[\Gamma_1:\ov{\Gamma}]}\int_{\Gamma\backslash \H} f(z)\ov{g(z)} y^k
\frac{dx
dy}{y^2}.
\] 
where $\Gamma_1=\PSL_2(\Z)$ and $\ov{\Gamma}$ denotes the projectivisation of $\Gamma$. 
An extension to $M_k(\Gamma_1)$ was given by Zagier \cite{Z81}, using a renormalized integral
over a fundamental domain for $\Gamma_1\backslash \H$. In this note we use the same approach
to extend the Petersson product to all of $M_k(\Gamma)$. We show that the extended
Petersson product has the same equivariance properties under the action of Hecke operators as
the usual one, and for $\Gamma=\Gamma_1(N)$ we show that it is nondegenerate when $k>2$. When
$k=2$, we find somewhat surprisingly that it can be degenerate, and we give examples when it
is nondegenerate and when it is degenerate. 

Our motivation comes from the theory of period polynomials associated to modular forms. In
\cite{PP}, we generalize Haberland's formula by showing that the extended Petersson
product of arbitrary modular forms is given by a pairing on their (extended) period
polynomials. The nondegeneracy of the extended Petersson product is then needed to show that 
$M_k(\Gamma)$ is isomorphic to the plus and minus parts of the space of period polynomials of
all modular forms, extending the classical Eichler-Shimura isomorphism. For $\Gamma_0(N)$, the
Hecke equivariance is used to show that the pairing on extended period polynomials is also
Hecke equivariant. 

Other approaches to extending the Petersson inner product to all modular forms 
are given by Chiera \cite{C07}, and by Deitmar and Diamantis \cite{DD}. An
adelic version of the renormalization method is given by Michel and Venkatesh \cite[Ch.
4.3]{MV}. For $\Gamma_1(p)$ with $p$ prime, the nondegeneracy of the extended Petersson
product was also proved by B\"ocherer and Chiera \cite[Prop. 5.5]{BC}.

\section{Extended Petersson scalar product}

We give three equivalent definitions of the extended Petersson product following \cite{Z81}.
The first definition appears naturally in \cite{PP}, where we show that the Petersson
product of $f,g \in M_k(\og)$ can be computed in terms of a pairing on the period polynomials
of $f,g$, generalizing a formula proved by Haberland for $S_k(\Gamma_1)$. 

For the first definition assume that $\Gamma$ is a finite index subgroup of $\SL_2(\Z)$, and
let $C_\Gamma=[\Gamma_1:\ov{\Gamma}]$. Let
$\F$ be the fundamental domain $\{z\in \H: |z|\ge 1, |\re\, z |\le 1/2 \}$ for $\Gamma_1$, and
for $T>1$ let $\F_T$ be the truncated domain for which $\im\, z <T$. Let
$$\tr(f\ov{g})(z)= \sum_A f|A(z)\ov{g}|A(z),$$ 
where here and below sums over $A$ are over complete system of representatives for
$\ov{\Gamma}\backslash\Gamma_1$.\footnote{If $k$ is odd, there is a sign ambiguity in
defining $f|A$ for $A$ in $\Gamma_1=\PSL_2(Z)$, but the ambiguity dissapears when considering
the product $f|A\ov{g}|A$.} The function $y^k\tr{(f\ov{g})}$ 
is a $\Gamma_1$-invariant, renormalizable function in the sense of 
\cite{Z81}, satisfying  $y^k\tr(f\ov{g})(z)=a_0(f,g)y^k+O(y^{-K})$ for all
$K$, with $a_0(f,g)=\sum_A a_0(f|A)\overline{a_0(g|A)}$. Therefore we can define for $f,g\in
M_k(\Gamma)$ the scalar product  
\begin{equation}\label{2.1}
\begin{aligned}
 (f,g)&=\frac{1}{C_\Gamma}\lim_{T\rightarrow \infty}\Big[ \int_{\F_T} 
   y^k\tr{(f\ov{g})} d\mu - a_0(f,g)\frac{T^{k-1}}{k-1} \Big]\\
 &=\frac{1}{C_\Gamma}\int_{\F}[y^k\tr{(f\ov{g})} (z) - a_0(f,g)E(z,k)]d\mu
\end{aligned}
\end{equation}
where $E(z,s)=\sum_{\gamma \in\Gamma_{1\infty}\backslash \Gamma_1} \im(\gamma z)^s$ is the
weight 0 Eisenstein series ($\Gamma_{1\infty}$ is the stabilizer of the cusp $\infty$), and
$d\mu=\frac{dx dy}{y^2}$ is the $SL_2(\R)$-invariant measure.

The next version can be defined for an arbitrary Fuchsian group of the first kind.
Let $\D$ be a fundamental domain for $\Gamma$, and for $s\in \C$ let $\A_s(\Gamma)$ be the
space of (weight 0) automorphic  functions  for $\Gamma$, which are eigenforms of the
hyperbolic Laplacian with eigenvalue $s(s-1)$. For any function $F_{f,g}\in \A_k(\Gamma)$ such
that $f(z)\overline{g(z)} y^k-F_{f,g}(z)$ vanishes at all cusps (an example will be given
shortly), we have:                                                                           
\begin{equation}\label{2.3}
 (f,g)=\frac{\pi}{3\vol\ \D}\int_{\D}
[f(z)\overline{g(z)} y^k  - F_{f,g}(z)] d\mu
\end{equation}
The right side is independent of $F_{f,g}$: if $F'_{f,g}$ is another choice, the difference 
$F_{f,g}-F'_{f,g}\in  \A_k(\Gamma)$ is a cusp form, so its integral over $\D$ vanishes.

Assume now that $\Gamma$ is a finite index subgroup of $\SL_2(\Z)$, so that $\vol\ \D=
\frac{\pi}{3} C_\Gamma$. Decomposing $\D=\cup_{A} A \F$, \eqref{2.3} becomes        
\[
 (f,g)=\frac{1}{C_\Gamma}\int_{\F}\Big[  y^k \tr(f\ov{g})(z) - \sum_A F_{f,g}(Az)
\Big]d\mu.
\] 
Since both $\sum_A F_{f,g}(Az)$ and $a_0(f,g)E(z,k)$ belong to $\A_k(\Gamma_1) $, and they
have the same behaviour at infinity as $y^k \tr(f\ov{g})$ it follows as before that the last
equation agrees with \eqref{2.1}.

To give an example of $F_{f,g}$ as above, let $\SS\subset \PP^1(\Q)$ be a complete set of
inequivalent cusps of $\Gamma$. For $\a\in \PP^1(\Q)$ fix $\sigma_\a\in PSL_2(\R)$ with
$\sigma_\a \infty=\a$. Let $\Gamma_\a$ be the subgroup of
$\Gamma$ of elements fixing $\a$. Define the weight 0 Eisenstein series associated with the
cusp $\a$ by:
\[
E_\Gamma^\a(z,s)=\sum_{\gamma \in\Gamma_{\a}\backslash \Gamma} \im(\sigma_\a^{-1}\gamma z)^s 
\]
which converges absolutely for $\re s > 1 $, and belongs to  $\A_s(\Gamma)$. From the
Fourier expansion of $E_\Gamma^\a(\sigma_\b z,s)$ (see \cite{Iw02}, Theorem 3.4), it follows
that 
\[
E_\Gamma^\a(\sigma_\b z,s)=\delta_{\a\b} y^s+\phi_{\a\b}(s)y^{1-s}+O\big((1+y^{-\re s})e^{-2\pi
y} \big)
\] 
with $\delta_{\a\b}=1$ if $\a=\b$ and 0 otherwise, and $\phi_{\a\b}(s)$ an explicit function. 
Assuming that the fundamental domain $\D$ is chosen such that its vertices on the boundary of
$\H$ are precisely a complete set of representatives for the cusps of $\Gamma$, it follows
that the linear combination $$F_{f,g}(z)=\sum_{\a \in \SS}
a_0(f|{\sigma_\a})a_0(\ov{g|\sigma_{\a}})
E_\Gamma^\a(z,k)\in \A_k(\Gamma) $$ is such that $f(z)\overline{g(z)} y^k -F_{f,g}(z)$ vanishes
at all cusps, so $F_{f,g}$ is a valid choice in \eqref{2.3}.

Lastly, assuming $\Gamma$ is a finite index subgroup of $\SL_2(\Z)$, from \eqref{2.3} with the
choice of $F_{f,g}$ as in the previous paragraph we have
\begin{equation}\label{2.2}
 (f,g)=\frac{\pi}{3}(4\pi)^{-k}\Gamma(k)\res_{s=k}L(s,f,\ov{g}).
\end{equation}
This identity is well known if $f,g$ are cuspidal when it goes back to
Rankin. If both $f,g$ are noncuspidal, it follows from extending to $\Gamma$ the Rankin-Selberg
method developed in \cite{Z81} for the full modular group. Since the generalization is
straightforward, we omit the details.

\section{Adjoints of Hecke operators}

In this section we show that Hecke operators have the same adjoints with respect to the
extended Petersson product on $M_k(\Gamma)$ as with respect to the one on $S_k(\Gamma)$. The
proof copies the classical one given in \cite{Sh}, using the definition of
$(f,g)$ in \eqref{2.3}. We assume $\Gamma$ is a Fuchsian subgroup of $\SL_2(\R)$ of the first
kind, namely a subgroup acting discretely on $\H$ and of finite covolume.

Let $\tilde{\Gamma}$ consist of elements $\alpha$ of $\GL_2(\R)$ such that
$\alpha\Gamma \alpha^{-1}$ is commensurable with $\Gamma$. For $\alpha\in
\tilde{\Gamma}$, let $\Gamma=\cup_{i=1}^r (\Gamma\cap \alpha^{-1}\Gamma \alpha) \gamma_i$
(disjoint union). Then $\alpha \gamma_i$ is a complet system of representatives for
$\Gamma\backslash  \Gamma \alpha \Gamma$ and the action of the Hecke operator associated with
the coset $\Gamma \alpha \Gamma$ is defined on $f\in M_k(\Gamma)$ by
\[
f|[\Gamma \alpha \Gamma] =n^{k-1} \sum_{i=1}^r f|_{k} \alpha \gamma_i
\]
where $n=\det \alpha$ and $f|_{k} \gamma (z)= f(\gamma z) j(\gamma, z)^{-k}$ for $\gamma\in
\GL_2(\R)$ (note that the stroke operator is normalized differently than by Shimura). 

\begin{prop}\label{p1} The adjoint of the operator $[\Gamma \alpha \Gamma]$ is $[\Gamma
\alpha^\vee \Gamma]$ namely for $f,g\in M_k(\Gamma)$ 
\[ 
(f|[\Gamma \alpha \Gamma] ,g)=(f, g|[\Gamma \alpha^\vee\Gamma]) 
\]
where $\alpha^\vee=\alpha^{-1} \det \alpha$. 
\end{prop}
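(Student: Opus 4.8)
The plan is to mimic the classical argument from Shimura, but to phrase everything in terms of the integral representation \eqref{2.3}, since that formula makes no distinction between cuspidal and noncuspidal forms. The key observation is that the extended product differs from the ordinary integral only by the subtraction of a function $F_{f,g}\in \A_k(\Gamma)$ which integrates to the correct renormalized value, and that this correction term is itself manufactured out of the constant terms of $f$ and $g$ at the cusps in a way that respects the Hecke action. So I would first reduce everything to a statement about the naive (unrenormalized) integrand $f(z)\ov{g(z)}y^k$, and only at the end track what the renormalization does.

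First I would write out both sides using \eqref{2.3}, taking as fundamental domain for $\Gamma$ a set $\D$ and using the double-coset decomposition $\Gamma\backslash\Gamma\alpha\Gamma=\{\alpha\gamma_i\}_{i=1}^r$. The heart of Shimura's argument is the pointwise identity
\[
\sum_i (f|_k\alpha\gamma_i)(z)\,\ov{g(z)}\,y^k = \sum_i f(z')\,\ov{(g|_k\alpha^\vee\gamma_i')(z')}\,{y'}^{k}
\]
after an appropriate change of variables $z\mapsto z'=\alpha\gamma_i z$ inside each summand, using that $y^k|f|_k\gamma(z)|$-type weight-$k$ factors combine with the invariant measure $d\mu$ to give a genuinely invariant expression, and that $\alpha^\vee=\alpha^{-1}\det\alpha$ is precisely the element for which $(f|_k\alpha)\ov{g}y^k$ integrates to $f\,\ov{(g|_k\alpha^\vee)}y^k$ over the quotient. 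Concretely, the weight factor $j(\alpha,z)^{-k}$ against $y^k$ produces $\im(\alpha z)^k/|j(\alpha,z)|^{\text{stuff}}$, and summing over the coset representatives together with reindexing the decomposition for $\alpha^\vee$ turns the left integrand into the right one. So the main integral identity $(f|[\Gamma\alpha\Gamma],g)=(f,g|[\Gamma\alpha^\vee\Gamma])$ holds for the unrenormalized integrands.

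The step that requires genuine care — and which I expect to be the main obstacle — is that the individual integrals $\int_\D f(z)\ov{g(z)}y^k\,d\mu$ do not converge when neither form is cuspidal, so one cannot simply manipulate the naive integrands. This is exactly why the correction term $F_{f,g}$ is present. Thus the real content is to check that the renormalization is Hecke compatible, i.e. that $F_{f|[\Gamma\alpha\Gamma],g}$ and $F_{f,g|[\Gamma\alpha^\vee\Gamma]}$ play symmetric roles so that the subtracted Eisenstein contributions match up. I would argue this by working with a common refinement: choose $F_{f,g}$ as the explicit Eisenstein combination $\sum_{\a}a_0(f|\sigma_\a)\,a_0(\ov{g|\sigma_\a})\,E_\Gamma^\a(z,k)$ built from constant terms at cusps, and verify that the Hecke operator permutes the cusps and acts on these constant terms in the same way it acts on the product $f\ov{g}y^k$ near each cusp. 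Since $E_\Gamma^\a(z,k)\in\A_k(\Gamma)$ is square-integrable away from the cusp and the difference $f\ov{g}y^k-F_{f,g}$ decays there, every integral in sight converges absolutely, the change of variables in the first paragraph is justified termwise, and the two sides agree.

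Concretely, I would organize the proof as: (i) establish absolute convergence of $\int_\D[f\ov{g}y^k-F_{f,g}]d\mu$ and hence the validity of splitting and substituting; (ii) apply the change of variables $z\mapsto\alpha\gamma_i z$ in each term, using the invariance of $d\mu$ and the cocycle relation for $j$, to move the $\alpha$ from $f$ onto $\ov{g}$ as $\alpha^\vee$; (iii) confirm that the renormalizing functions transform consistently, so that after substitution the subtracted term for the left side becomes exactly the subtracted term for the right side. The cleanest route for (iii) is to note that the renormalization constant $a_0(f,g)$ in \eqref{2.1} depends only on the constant terms $a_0(f|A),a_0(g|A)$, and that the Hecke operator acts on the collection of constant terms at all cusps by a matrix that is transposed when passing from $\alpha$ to $\alpha^\vee$; this transposition is precisely what moves the operator across the pairing. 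I expect the bookkeeping in (iii) to be the subtle part, but the underlying mechanism is identical to Shimura's and no new analytic input beyond the already-established convergence of \eqref{2.3} is needed.
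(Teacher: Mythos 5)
Your skeleton (Shimura's double-coset manipulation plus tracking of the renormalization) is the same as the paper's, but the step you yourself flag as the crux --- your (iii) --- is where the proposal has a genuine gap, and the route you propose for it is both unproven and stronger than what is needed. You require that after the change of variables ``the subtracted term for the left side becomes exactly the subtracted term for the right side,'' and you propose to verify this by showing that $[\Gamma\alpha\Gamma]$ acts on the vector of constant terms $\bigl(a_0(f|\sigma_\a)\bigr)_\a$ by a matrix which is (conjugate-)transposed when $\alpha$ is replaced by $\alpha^\vee$. That transposition statement is nowhere established in your proposal; it is a nontrivial combinatorial fact about how double cosets permute cusps and scale constant terms --- essentially the Eisenstein-boundary shadow of the very adjointness you are trying to prove --- so deferring it as ``bookkeeping'' leaves the heart of the argument missing. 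Moreover, exact equality of the subtracted functions is the wrong target: pulling a renormalizer through a change of variables or a coset decomposition produces \emph{some} element of $\A_k$ with the correct behaviour at the cusps, not the specific Eisenstein combination attached to the new pair of forms, and there is no reason these should coincide on the nose.

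The idea that makes the proof work, and which is absent from your proposal, is the independence of \eqref{2.3} from the choice of $F_{f,g}$: any two admissible choices differ by an element of $\A_k(\Gamma)$ vanishing at all cusps, whose integral over $\D$ is zero, so they give the same value in \eqref{2.3}. With this in hand one repeats Shimura's proof of (3.4.5) step by step, subtracting at each stage \emph{any} valid renormalizer for the current integrand over the current group; changes of variables and coset decompositions carry valid renormalizers to valid renormalizers, and the independence fact lets you swap back to a standard choice whenever convenient. No computation of the Hecke action on constant terms or on cusps is ever required. A second, smaller inaccuracy: you cannot change variables ``termwise inside $\D$'' for the individual $f|_k\alpha\gamma_i$, since each such term is modular only for a subgroup such as $\Gamma\cap\alpha^{-1}\Gamma\alpha$; Shimura's argument passes through index-normalized products over these subgroups, so your renormalizers must live in $\A_k$ of those subgroups as well --- which is again handled painlessly by the same independence fact, but not by an exact-matching requirement.
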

\begin{proof} The proof is identical to that of eq. (3.4.5) in \cite{Sh}, except that a
term involving $F_{f,g}$ has to be subtracted at each step, and one has to use repeatedly the
fact that $F_{f,g}$ can be replaced by any other function in $\A_k(\Gamma)$ having the same
behaviour at the cusps. 
\end{proof}

\section{Nondegeneracy}

We show that the Petersson product is
nondegenerate on $M_k(\Gamma)$ for $\Gamma=\Gamma_1(N)$ and for $k> 2$. For $k=2$ and 
$\Gamma=\Gamma_0(N)$ or  $\Gamma=\Gamma_1(N)$, we give examples when it is degenerate or
nondegerate. For the proof, we compute explicitly the
determinant of the matrix of the Petersson product, with respect to a basis of Hecke eigenforms
for $\EE_k(\Gamma)$, using formula \eqref{2.2}. The degeneracy when $k=2$ is somewhat
surprising, and we give an alternate proof  for $\Gamma=\Gamma_0(6)$ in \cite[Sec. 7]{PP},
using period polynomials and a generalization of Haberland's formula.

Let $\psi, \phi$ be primitive characters of conductors $c_\psi, c_\phi$ with $c_\psi c_\phi|N$
and $\psi\phi(-1)=(-1)^k$, and let $t$ be a
divisor of $N/(c_\psi c_\phi )$. When $k>2$, a basis of $\EE_k(\Gamma)$ consists of the
Eisenstein series $E_k^{\psi,\phi, t}(z)=E_k^{\psi,\phi}(tz)$ where
\[
E_k^{\psi,\phi}(z)=\frac{\delta(\psi)}{2} L(1-k, \phi) + \sum_{n=1}^\infty
\sigma_{k-1}^{\psi,\phi}(n) q^n 
\]
where  $\sigma_{k-1}^{\psi,\phi}(n)=\sum_{n=ad} \psi(a) \phi(d) d^{k-1}$, and $\delta(\psi)$
is 1 if $\psi={\bf 1}$ (the character of
conductor 1), and zero otherwise. When $k=2$, the same elements form a basis, with the
series $E_k^{{\bf 1},{\bf 1}, t}$ replaced by
$E_k^{{\bf 1},{\bf 1}}(z)-tE_k^{{\bf 1},{\bf 1}}(tz)$ for $t>1$, and  with $E_k^{{\bf 1},{\bf
1}, 1}$ removed. For $\chi$ a character mod $N$, a basis of the space of
Eisenstein series for $\Gamma_0(N)$ with character $\chi$ consists of those $E_k^{\psi,\phi,
t}$ for which $\chi=\psi\phi$. These Eisenstein series are
Hecke eigenforms for the operators $T_n$ with $(n,N)=1$ \cite{DS}.

For $(n,N)=1$, the adjoint of $T_n$ with respect to the Petersson scalar product is the
operator $T_n^*=<n>^{-1}T_n$, with $<n>$ the diamond operator. On the basis above $<n>$ acts by
\[
<n> E_k^{\psi,\phi, t}= \psi(n)\phi(n)E_k^{\psi,\phi, t},
\]
and Proposition \ref{p1} shows that $  \big(E_k^{\psi,\phi, t},
E_k^{\psi',\phi', t'}\big)=0$ unless $\psi=\phi', \phi=\psi'$. Therefore
the Petersson product is nondegenerate on $\EE_k(\Gamma)$ if and only if for every 
pair $\psi, \phi$ as above, the matrix $$M_{\psi,\phi}= [ ( E_k^{\psi,\phi, t}, E_k^{\phi,\psi,
t'}) ]_{t,t'}$$  is nonsingular, where the rows and columns are indexed by divisors $t,t'$ of
$N/(c_\psi c_\phi)$ (with $t,t'\ne 1$ if $k=2$ and $\psi=\phi={\bf 1}$).

We compute the entries of $M_{\psi,\phi}$ with the aid of 
\eqref{2.2}. Assuming $(k,\psi, \phi)\ne (2,{\bf 1}, {\bf 1})$, we
have $L(s,E_k^{\psi,\phi})=L(s,\psi)L(s-k+1, \phi)$, which has an Euler product. Using
\cite[Lemma 1]{Sh1} we get
\begin{multline*}
L(s, E_k^{\psi,\phi,t}, \ov{E_k^{\phi,\psi,t'}})=
\frac{L(s,\psi\ov{\phi})L(s-2k+2, \phi\ov{\psi})L(s-k+1, \psi\ov{\psi})  L(s-k+1,
\phi\ov{\phi})}{L(2s-2k+2, \psi\phi\ov{\psi}\ov{\phi})} \cdot \\
\cdot (drr')^{-s} \prod_{p^e||rr'}
\frac{X_p(e,s)}{1-\psi\phi\ov{\psi}\ov{\phi}(p)p^{2k-2-2s}} \quad \quad \quad \ 
\end{multline*}
where $d=(t,t')$ and $t=dr$, $t'=dr'$, and $X_p(e,s)$ is a polynomial of degree $\le 2$ in
$p^{-s}$, given below. The product is over primes $p|rr'$, with $p^e|rr', p^{e+1}\nmid
rr'$, and it equals 1 if $rr'=1$.  Note that $L(s, \psi\ov{\psi})$,  $L(s,
\phi\ov{\phi})$ have simple poles at $s=1$, and $L(s, \phi\ov{\psi})$ has a zero at 
$s=2-k$. Denoting by $R_{\psi,\phi}$ the residue at $s=k$ of
the fraction on the first line, it follows from \eqref{2.2} that 
\begin{equation}\label{4.1}
M_{\psi,\phi}=\frac{\pi}{3}(4\pi)^{-k}\Gamma(k) R_{\psi,\phi} M_k^{\psi,\phi}(L)
\end{equation}
where $L=N/(c_\psi c_\phi)$ and $M_s^{\psi,\phi}(L)$ is the matrix whose rows and columns are
indexed by divisors $t,t'$ of $L$ with the entry corresponding to $t,t'$ equal to 
\begin{equation}\label{3.1}
m(t,t')=(drr')^{-s} \prod_{p^e\|rr'}
\frac{X_p(e,s)}{1-\psi\phi\ov{\psi'}\ov{\phi'}(p)p^{2k-2-2s}}.
\end{equation}

When $k>2$ we have $R_{\psi,\phi} \ne 0$, since $L(s, \phi\ov{\psi})$ has a simple zero at 
$s=2-k$ (recall $\psi\phi(-1)=(-1)^k$). Therefore the Petersson product is nondegenerate if
and only if the matrix $M_k^{\psi,\phi}(L)$ is nonsingular for all choices $\psi, \phi$ as
above. When $k=2$, one can have $R_{\psi,\phi} = 0$, as $L(s, \phi\ov{\psi})$ may have a
zero of order at least two at $s=0$ when $\phi\ov{\psi}$ is not primitive. We discuss in more
detail the case $k=2$ at the end of
this section, and we now proceed to compute $\det M_s^{\psi,\phi}(L)$, assuming only  $(k,\psi,
\phi)\ne (2,{\bf 1}, {\bf 1})$. We fix $\psi, \phi$ and let $M_s(L)=M_s^{\psi,\phi}(L)$ for
brevity. 

Let  $\alpha=\psi(p),\ \alpha'=\phi(p)p^{k-1},$ and $ \beta=\ov{\phi}(p),\
\beta'=\ov{\psi}(p)p^{k-1}$ be the local factors in the Euler product of 
$L(s, E_k^{\psi,\phi})$, and $L(s, \ov{E_k^{\phi,\psi}})$ respectively, and 
$$ a(p^n)=\frac{\alpha^{n+1}-\alpha'^{n+1}}{\alpha-\alpha'},\
b(p^n)=\frac{\beta^{n+1}-\beta'^{n+1}}{\beta-\beta'}$$
(if $\alpha=\alpha'=0$, then $a(p^n)=0$, $n>0$). If $p^e\|r$ we have
\begin{equation}\label{4.0}
X_p(e,s)=a(p^e)-a(p^{e-1}) b(p) \alpha
\alpha'p^{-s}+a(p^{e-2})(\alpha\alpha')^2\beta\beta'p^{-2s}
\end{equation}
while if $p^e\|r'$ interchange $\alpha, \alpha'$ with $\beta, \beta'$ and $a(p^i)$ with
$b(p^i)$ in the definition of $X_p(e,s)$. We use the convention $a(n)=0$ if $n\notin \Z$.

The next lemma reduces the computation of $\det M_s(L)$ to the case $L$ is a prime power. 
\begin{lemma}\label{l1} With the notations as above, consider $L_1,L_2$ two relatively prime
numbers. Then:
$$
\det(M_s(L_1L_2))=\det(M_s(L_1))^{\sigma_0(L_2)}\cdot\det(M_s(L_2))^{\sigma_0(L_1)},
$$
where $\sigma_0(L)$ denotes the number of divisors of $L$. \end{lemma}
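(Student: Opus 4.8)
The plan is to show that, after a suitable simultaneous reordering of rows and columns, the matrix $M_s(L_1L_2)$ is exactly the Kronecker product $M_s(L_1)\otimes M_s(L_2)$, and then to invoke the standard determinant identity $\det(A\otimes B)=(\det A)^{n}(\det B)^{m}$ for an $m\times m$ matrix $A$ and an $n\times n$ matrix $B$.

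First I would check that the entry $m(t,t')$ of \eqref{3.1} is multiplicative in its indices. Writing $t=\prod_p p^{a_p}$ and $t'=\prod_p p^{a_p'}$, we have $d=(t,t')=\prod_p p^{\min(a_p,a_p')}$, while $r=t/d$ and $r'=t'/d$ are supported on disjoint sets of primes with $v_p(rr')=|a_p-a_p'|$, so that $v_p(drr')=\max(a_p,a_p')$. Since the Dirichlet characters are completely multiplicative, and the local factor $X_p(e,s)$ of \eqref{4.0} together with the matching denominator in \eqref{3.1} depends only on the $p$-adic valuations $a_p,a_p'$, the entry factors as $m(t,t')=\prod_p m_p(a_p,a_p')$, where $m_p$ depends only on $a_p,a_p'$ and equals $1$ whenever $p\nmid L$ (recall $a_p=a_p'=0$ there).

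Now suppose $(L_1,L_2)=1$. Every divisor $t$ of $L_1L_2$ factors uniquely as $t=t_1t_2$ with $t_i\mid L_i$, and the primes dividing $L_1$ are disjoint from those dividing $L_2$. Splitting the product over primes accordingly gives $m(t_1t_2,\,t_1't_2')=m(t_1,t_1')\,m(t_2,t_2')$. Under the bijection between divisors of $L_1L_2$ and pairs $(t_1,t_2)$ of divisors of $L_1,L_2$, this is precisely the assertion that $M_s(L_1L_2)=M_s(L_1)\otimes M_s(L_2)$ once rows and columns are both ordered lexicographically by $(t_1,t_2)$. A different ordering of the divisors of $L_1L_2$ conjugates the matrix by a single permutation matrix (the same permutation on rows and columns), which leaves the determinant unchanged.

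Applying the Kronecker determinant identity with $m=\sigma_0(L_1)$ and $n=\sigma_0(L_2)$ then yields $\det M_s(L_1L_2)=\det(M_s(L_1))^{\sigma_0(L_2)}\det(M_s(L_2))^{\sigma_0(L_1)}$, as claimed. The only step needing genuine care is the multiplicativity of $m(t,t')$: one must confirm that the piecewise definition of $X_p(e,s)$ according to whether $p^e\|r$ or $p^e\|r'$ is compatible with the prime decomposition. This holds because $r$ and $r'$ are coprime, so for each prime $p$ exactly one of the two cases applies, making each local factor $m_p(a_p,a_p')$ unambiguous and depending only on the $p$-parts of $t$ and $t'$.
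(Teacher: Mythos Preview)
Your proof is correct and follows exactly the same approach as the paper: both observe that the multiplicativity $m(t_1t_2,t_1't_2')=m(t_1,t_1')m(t_2,t_2')$ forces $M_s(L_1L_2)$ to be the Kronecker product $M_s(L_1)\otimes M_s(L_2)$, and then apply the standard determinant identity for Kronecker products. Your version simply spells out in more detail why the entry $m(t,t')$ factors over primes and why the ordering of divisors is immaterial.
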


\begin{proof} If $t_1 t_2$, $t_1't_2'$ are two divisors of $L_1L_2$, with
$t_i , t_i'|L_i$, it follows from \eqref{3.1} that 
$m(t_1t_2, t_1't_2')=m(t_1,t_1') m(t_2, t_2'),$  
so the matrix $M(L_1L_2)$ is the Kronecker product of the matrices $M(L_1)$, $M(L_2)$ and the
conclusion follows.
\end{proof}

\begin{lemma}\label{l2} For $p$ a prime and $n\ge 1$, let $C_{p,s}= p^{\frac{n(n+1)}{2} s} $
and $y=p^{k-1-s}$. We have: 
$$
C_{p,s}\det M_s(p^n)=\begin{cases} 1 & \text{ if } \phi(p)=\psi(p)=0 \\
	  (1-y)^n & \text{ if exactly one of } \phi(p), \psi(p) \text{ is } 0  \\
         \frac{(1-y)^{n-1}}{(1+y)^{n+1}}\big(1-\frac{\alpha}{\alpha'}y  \big)^n
\big(1-\frac{\alpha'}{\alpha}y  \big)^n & \text{ if } \psi(p)\phi(p)\ne 0  
            \end{cases}
$$  
with $\alpha=\psi(p)$, $\alpha'=\phi(p)p^{k-1}$.  
\end{lemma}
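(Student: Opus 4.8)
The plan is to compute $\det M_s(p^n)$ by exploiting a three-term recursion hidden in the definition \eqref{4.0} of $X_p(e,s)$. Since $a(p^e)=\frac{\alpha^{e+1}-\alpha'^{e+1}}{\alpha-\alpha'}$ is the complete homogeneous symmetric function of $\alpha,\alpha'$, it satisfies $a(p^e)=(\alpha+\alpha')a(p^{e-1})-\alpha\alpha' a(p^{e-2})$, and substituting this into \eqref{4.0} collapses the generating series to
\[
\sum_{e\ge 0}X_p(e,s)\,w^e=\frac{(1-\alpha y w)(1-\alpha' y w)}{(1-\alpha w)(1-\alpha' w)},\qquad y=p^{k-1-s},
\]
with the analogous identity obtained by replacing $(\alpha,\alpha')$ with $(\beta,\beta')$ on the $r'$-side; equivalently $X_p(e,s)-(\alpha+\alpha')X_p(e-1,s)+\alpha\alpha' X_p(e-2,s)$ is supported on $e\in\{0,1,2\}$. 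The arithmetic input that ultimately produces the factors in the statement is that $|\psi(p)|=|\phi(p)|=1$ when $\psi(p)\phi(p)\ne 0$, so that $\alpha\beta'=\alpha'\beta=p^{k-1}$; hence $p^{-s}\alpha\beta=\frac{\alpha}{\alpha'}y$, $p^{-s}\alpha\beta'=p^{-s}\alpha'\beta=y$ and $p^{-s}\alpha'\beta'=\frac{\alpha'}{\alpha}y$.

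Next I would dispose of the two degenerate cases, which are precisely those where some of $\alpha,\alpha',\beta,\beta'$ vanish and the recursion drops order. If $\psi(p)=\phi(p)=0$ then $X_p(e,s)=\delta_{e,0}$, so $M_s(p^n)=\mathrm{diag}(p^{-si})_{i=0}^n$ and $C_{p,s}\det M_s(p^n)=1$. If exactly one of $\psi(p),\phi(p)$ vanishes, then $X_p$ reduces to a single geometric term in each of the two triangles, so after scaling row $i$ by $p^{si}$ the matrix becomes the Toeplitz matrix with $(i,j)$ entry $\alpha'^{\,i-j}$ for $i\ge j$ and $(p^{-s}\beta)^{\,j-i}$ for $i\le j$; an elementary row reduction telescopes its determinant to $(1-p^{-s}\alpha'\beta)^n=(1-y)^n$.

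For the main case $\psi(p)\phi(p)\ne 0$ I would argue by induction on $n$. The cleanest source of the recursion is that, up to a factor independent of $t,t'$ (the $p$-Euler factor of the ratio of $L$-functions on the first displayed line of this section), the entry $m(p^i,p^j)$ equals the local Rankin--Selberg factor $Z_{ij}=\sum_{m\ge\max(i,j)}a(p^{m-i})\,b(p^{m-j})\,p^{-ms}$, which is the pairing $\langle V_p^{\,i}f,V_p^{\,j}\overline{g}\rangle$ of the $V_p$-shifts of $f=E_k^{\psi,\phi}$ and $g=E_k^{\phi,\psi}$; since $V_p,U_p$ are adjoint for this pairing up to $p^{-s}$ and $g$ is a Hecke eigenform (so that $U_p\overline{g}=(\beta+\beta')\overline{g}-\beta\beta'V_p\overline{g}$), one gets $m(p^i,p^j)=p^{-s}m(p^{i-1},p^{j-1})$ for $i,j\ge 1$, together with a three-term relation along the first row and column. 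These same recursions can also be read off directly from the generating function above. Using them I would apply the $(\beta,\beta')$-recursion to the rows $i\ge 2$ and the $(\alpha,\alpha')$-recursion to the columns: the strict lower triangle and the first column clear, and the relations $p^{-s}\alpha\beta=\frac{\alpha}{\alpha'}y$ etc.\ turn the surviving block into a matrix of the same shape at level $p^{n-1}$, with the coefficients of its first row and column rescaled by $(1-y)$, $(1-\frac{\alpha}{\alpha'}y)$ and $(1-\frac{\alpha'}{\alpha}y)$. This yields the recursion $\det M_s(p^n)=p^{-ns}\,\frac{1-y}{1+y}\bigl(1-\frac{\alpha}{\alpha'}y\bigr)\bigl(1-\frac{\alpha'}{\alpha}y\bigr)\det M_s(p^{n-1})$ for $n\ge 2$, which together with the direct evaluation of the $2\times2$ determinant at $n=1$ integrates to the claimed closed form.

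The main obstacle is the boundary bookkeeping in this reduction. The diagonal entries are normalized differently from the off-diagonal ones, since the empty product in \eqref{3.1} removes the $(1-y^2)^{-1}$ factor on the diagonal, and the row and column operations do not annihilate the entries adjacent to the diagonal or those in the last row and column; one must track these carefully to see the clean factorization emerge, and in particular to produce the single power of $(1+y)$ in the denominator at each step. It is also essential to check that the coefficients of the recursions are independent of $n$, so that the same operations are legitimate at every size; organizing the induction through the Schur complement of the matrix $(Z_{ij})$, whose leading principal submatrix is exactly the level-$p^{n-1}$ matrix, is the most reliable way to keep this under control.
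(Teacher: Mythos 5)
Your handling of the two degenerate cases is correct and essentially identical to the paper's: after rescaling row $i$ by $p^{is}$ the matrix becomes Toeplitz with $1$'s on the diagonal, it is the identity when $\psi(p)=\phi(p)=0$, and a single telescoping row reduction gives $(1-y)^n$ when exactly one of the characters vanishes at $p$. Your structural observations for the main case are also correct: the generating function for $X_p(e,s)$, the unitarity relations $\alpha\beta'=\alpha'\beta=p^{k-1}$, and the shift property $m(p^i,p^j)=p^{-s}m(p^{i-1},p^{j-1})$ (which, incidentally, follows in one line from \eqref{3.1}, with no need for the $U_p,V_p$-adjointness detour).

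However, the heart of the lemma --- the evaluation when $\psi(p)\phi(p)\ne 0$ --- is not actually proved. The inductive step is asserted rather than derived: you claim that after row and column operations the ``surviving block'' is the level-$p^{n-1}$ matrix with its first row and column rescaled by the three factors $(1-y)$, $\bigl(1-\frac{\alpha}{\alpha'}y\bigr)$, $\bigl(1-\frac{\alpha'}{\alpha}y\bigr)$ --- three scalars for two lines, which does not parse as a determinant identity --- and you concede that producing the single factor $(1+y)^{-1}$ per step is exactly what you cannot yet do. Moreover, row operations (which clear one triangle via the $(\beta,\beta')$ recursion) and column operations (which clear the other via the $(\alpha,\alpha')$ recursion) interfere: each destroys the Toeplitz structure the other relies on, and this interaction is precisely the unresolved bookkeeping. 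In effect, the recursion $\det M_s(p^n)=p^{-ns}\,\frac{1-y}{1+y}\bigl(1-\frac{\alpha}{\alpha'}y\bigr)\bigl(1-\frac{\alpha'}{\alpha}y\bigr)\det M_s(p^{n-1})$ is reverse-engineered from the answer, not established. The gap can be closed with no induction at all, which is what the paper does: writing $X(e)$ for the rescaled lower-triangle entries and $p^{-se}Y(e)$ for the upper-triangle ones, the recursion $X(e)-a(p)X(e-1)+\alpha\alpha'X(e-2)=0$ holds for all $e\ge 2$ with $X(0)=1$, and --- this is the key boundary identity --- also at $e=1$ upon setting $X(-1)=p^{-s}Y(1)$, using $\alpha\alpha'b(p)=p^{k-1}a(p)$. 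Then a single downward pass of \emph{row operations only}, replacing row $i$ by $(\text{row } i)-a(p)\cdot(\text{row } i-1)+\alpha\alpha'\cdot(\text{row } i-2)$ for $i=n,n-1,\dots,2$, annihilates every subdiagonal entry except $m'(1,0)=X(1)$, so that
\[
\det M_s'(p^n)=\bigl[1-X(1)Y(1)p^{-s}\bigr]\,\bigl[1-a(p)Y(1)p^{-s}+\alpha\alpha'Y(2)p^{-2s}\bigr]^{n-1},
\]
and a short computation with $X(1)=\frac{a(p)}{1+y}$, $Y(1)=\frac{b(p)}{1+y}$, $Y(2)=\frac{b(p)^2}{1+y}-\beta\beta'$ yields the stated closed form. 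This one-shot triangularization, resting on the $e=1$ boundary identity, is the idea your plan is missing.
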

\begin{proof} For $0\le i,j\le n$, denote by $m(i,j)=m(p^i, p^j)$ in \eqref{3.1}. 
The matrix  $M_s(p^n)$ has elements $1, p^{-s}, \ldots, p^{-ns}$ on the
diagonal. We rescale it by multiplying the $i$-th line by $p^{is}$, $0\le i\le n$, which
multiplies its determinant by $C_{p,s}$, and we denote by  $M_s'(p^n)$ the resulting matrix,
having $\det M_s'(p^n)=C_{p,s}\det M_s(p^n)$. Note that the matrix $M_s'(p^n)$ has constant
entries on all diagonals parallel to the main diagonal, and $m'(i,i)=1$, where $m'(i,j)$ are
its entries, $0\le i,j\le n$.

When $\phi(p)=\psi(p)=0$, the matrix $M_s'(p^n)$ is the identity, and the first formula
follows. 

When exactly one of $\psi(p), \phi(p)$ is 0, the off diagonal elements are
$m'(i+e,i)=a^e$, $m'(i,i+e)=b^e p^{-se}$, $e\ge 0$, with $a=a(p), b=b(p)$ as in \eqref{4.0},
$ab=p^{k-1}$. The determinant is easy to compute, by subtracting from line $i$ the quantity
$a$ times the previous line, starting with $i=n,n-1,\ldots 1$. The resulting matrix will be
diagonal, of determinant $(1-abp^{-s})^n=(1-y)^n$.  
 
Assume now that $\psi(p)\phi(p)\ne 0$. By \eqref{4.0}, for $e\ge 1$ we have $m'(i+e, i)=X(e)$,
$m'(i, i+e)=p^{-s e}Y(e)$, where $X(e)$ is given by
\[
X(e)=\frac{a(p^e)-a(p^{e-1}) a(p) y+a(p^{e-2})\alpha \alpha' y^2}{1-y^2}, \quad e\ge 1,
\]
with $a(p^{-1})=0,$ and $Y(e)$ given by the same formula as $X(e)$ with $a$ interchanged with
$b$ and $\alpha, \alpha'$ with $\beta, \beta'$.  We set $X(0)=1$, so that $m'(i+e,
i)=X(e)$ for $e\ge 0$. 

Since $\{a(p^e)\}$ satisfies the recurrence $a(p^e)-a(p^{e-1}) a(p) +a(p^{e-2})\alpha
\alpha'=0$, the same is true about $X(e)$, namely 
\[
X(e)-X(e-1) a(p) +X(e-2)\alpha \alpha'=0, \quad e\ge 3.
\] 
In fact one checks that the recurrence holds for $e=2$ as well, with $X(0)=1$, and also for
$e=1$, with $X(-1)=p^{-s}Y(1)$ (using $\alpha\alpha' b(p)=p^{k-1} a(p)$ and recalling
$y=p^{k-1-s}$). Starting with $i=n, n-1, \ldots, 2$, we subtract from the $i$-th line a
multiple $a(p)$ of the $(i-1)$-th line, and add a multiple $\alpha \alpha'$ of the $(i-2)$-th
line. The resulting matrix will be upper-triangular, except for the entry $m'(1,0)=X(1)$, so
 $$\det M_s'(p^n) = [1-X(1)Y(1)p^{-s}][1-a(p)Y(1)p^{-s}+\alpha\alpha' Y(2)p^{-2s}]^{n-1},$$
which is easily seen to equal the expression in the statement. 
\end{proof}

\begin{corollary} \label{c1}
a)  If $k>2$ we have $\det M_k(L)\ne 0$. 

b) If $k=2$ we have $\det M_k(L)= 0$ if and only if $\phi(p)=\psi(p)\ne 0$.
\end{corollary}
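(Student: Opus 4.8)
The plan is to combine the two preceding lemmas. By Lemma \ref{l1}, iterated over the prime factorization of $L$, the determinant $\det M_k(L)$ equals a product $\prod_{p^n\|L}\big(\det M_k(p^n)\big)^{\sigma_0(L/p^n)}$ with strictly positive exponents; since $\C$ has no zero divisors, $\det M_k(L)=0$ if and only if $\det M_k(p^n)=0$ for some prime power $p^n\|L$. So I would reduce immediately to deciding, for a fixed prime power $p^n$, whether the explicit value of $\det M_k(p^n)$ furnished by Lemma \ref{l2} vanishes. As $C_{p,s}=p^{n(n+1)s/2}$ never vanishes, this is the same as asking whether $C_{p,k}\det M_k(p^n)=0$.

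Next I would specialize Lemma \ref{l2} at $s=k$, where $y=p^{k-1-s}=p^{-1}$ satisfies $0<y<1$. In the first two cases of the lemma the value is $1$ or $(1-y)^n$, both nonzero; these cases therefore never produce degeneracy, for any $k\ge 2$. Everything hinges on the third case $\psi(p)\phi(p)\ne 0$, where
\[
C_{p,k}\det M_k(p^n)=\frac{(1-y)^{n-1}}{(1+y)^{n+1}}\Big(1-\tfrac{\alpha}{\alpha'}y\Big)^n\Big(1-\tfrac{\alpha'}{\alpha}y\Big)^n,
\qquad \alpha=\psi(p),\ \alpha'=\phi(p)p^{k-1}.
\]
The factors $(1-y)^{n-1}$ and $(1+y)^{-(n+1)}$ are manifestly nonzero, so the whole question is the vanishing of the two trailing factors.

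The crux is the magnitude comparison of those factors, and here I would use that a nonzero value of a Dirichlet character is a root of unity, so $|\psi(p)|=|\phi(p)|=1$. Substituting $y=p^{-1}$ gives $\tfrac{\alpha}{\alpha'}y=\psi(p)/(\phi(p)p^{k})$ of modulus $p^{-k}<1$, so $1-\tfrac{\alpha}{\alpha'}y\ne 0$ always; and $\tfrac{\alpha'}{\alpha}y=\phi(p)p^{k-2}/\psi(p)$ of modulus $p^{k-2}$, so $1-\tfrac{\alpha'}{\alpha}y=0$ forces $p^{k-2}=1$. For $k>2$ this fails (as $p\ge 2$), hence every factor is nonzero, giving $\det M_k(p^n)\ne 0$ for all prime powers and thus part a). For $k=2$ the second factor is $1-\phi(p)/\psi(p)$, which vanishes exactly when $\psi(p)=\phi(p)$; so $\det M_2(p^n)=0$ iff $\psi(p)=\phi(p)\ne 0$, and therefore $\det M_2(L)=0$ iff some prime $p\mid L$ satisfies $\phi(p)=\psi(p)\ne 0$, which is part b). The main obstacle is purely the bookkeeping that the root-of-unity normalization makes the two exponential terms genuinely distinguishable (modulus $p^{-k}$ versus $p^{k-2}$) and that $k=2$ is precisely the boundary $p^{k-2}=1$ at which cancellation becomes possible.
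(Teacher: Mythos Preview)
Your argument is correct and is exactly the intended one: the paper states the corollary without proof, as an immediate consequence of Lemmas~\ref{l1} and~\ref{l2}, and your write-up simply supplies the routine verification (multiplicativity reduces to prime powers, then one reads off the vanishing from Lemma~\ref{l2} with $y=p^{-1}$ using $|\psi(p)|=|\phi(p)|=1$).
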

From the discussion above we conclude: 
\begin{theorem} Let $\Gamma=\Gamma_1(N)$. 

a) If $k>2$ then the extended Petersson product on $M_k(\Gamma)$ is nondegenerate. 

b) The extended Petersson product on $M_2(\Gamma)$ is nondegenerate if $N$ is prime. It is
degenerate: if $N$ is divisible by $p^2q$ with $p\ne q $ primes; or if $N$ is divisible by
$pq$ with $p \ne q$ primes such that $q$ is not a primitive residue mod $p$.
\end{theorem}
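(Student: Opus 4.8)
The plan is to reduce nondegeneracy on all of $M_k(\Gamma)$ first to the Eisenstein subspace and then to the blocks $M_{\psi,\phi}$ introduced above. Since the extended product restricted to $S_k(\Gamma)$ coincides with the classical one, it is positive definite there, and for $f\in\EE_k(\Gamma)$, $g\in S_k(\Gamma)$ the constant term $a_0(f,g)$ vanishes, so $(f,g)$ is again the classical product, which annihilates cusp forms against Eisenstein series. Thus $M_k(\Gamma)=S_k(\Gamma)\oplus\EE_k(\Gamma)$ is orthogonal, and the product is nondegenerate on $M_k$ iff it is so on $\EE_k(\Gamma)$; by the action of the diamond operators together with Proposition \ref{p1} this amounts, as noted before the theorem, to the nonsingularity of every block $M_{\psi,\phi}$.

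For (a) I would read the answer off \eqref{4.1}: for every admissible pair (and $(k,\psi,\phi)\ne(2,{\bf 1},{\bf 1})$ automatically, since $k>2$) one has $M_{\psi,\phi}=c\,R_{\psi,\phi}\,M_k^{\psi,\phi}(L)$ with $c\ne0$. The residue $R_{\psi,\phi}$ is nonzero because $L(s,\phi\ov\psi)$ has only a simple zero at $s=2-k$, and $\det M_k^{\psi,\phi}(L)\ne0$ by Corollary \ref{c1}(a); hence every block is nonsingular.

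For (b), with $k=2$, a block is singular as soon as $R_{\psi,\phi}=0$ or $\det M_2^{\psi,\phi}(L)=0$, the latter meaning (Corollary \ref{c1}(b)) that some prime $\ell\mid L=N/(c_\psi c_\phi)$ has $\psi(\ell)=\phi(\ell)\ne0$. To prove degeneracy it suffices to exhibit one singular block. If $p^2q\mid N$ with $p$ odd, pick a nontrivial character $\chi$ modulo $p$ and set $\psi=\phi=\chi$: this is admissible ($\chi^2(-1)=1$), $c_\psi c_\phi=p^2\mid N$, and $q\mid L$ with $\psi(q)=\phi(q)\ne0$, so $\det M_2^{\chi,\chi}(L)=0$. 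Likewise, if $pq\mid N$ and some \emph{even} nontrivial $\chi$ modulo $p$ has $\chi(q)=1$, the admissible pair $\psi={\bf 1}$, $\phi=\chi$ gives $\det M_2^{{\bf1},\chi}(L)=0$; the hypothesis that $q$ is not a primitive root modulo $p$ is exactly what produces a nontrivial $\chi$ with $\chi(q)=1$. For $N$ prime the only admissible pairs have conductors in $\{1,N\}$: the blocks $({\bf 1},\chi)$ with $\chi$ even of conductor $N$ are $1\times1$ with $\phi\ov\psi=\chi$ primitive and even, so $R_{\psi,\phi}\ne0$ and they are nonzero, leaving only the exceptional $1$-dimensional block $({\bf 1},{\bf 1})$ to check.

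The main obstacle is the exceptional block $(2,{\bf 1},{\bf 1})$, which \eqref{4.1} excludes because there $L(s,\phi\ov\psi)=\zeta(s)$ produces a genuine double pole at $s=2$ rather than the simple pole of the generic case. On the modified basis $\phi_t=E_2(z)-tE_2(tz)$ ($t\mid N$, $t>1$) I would compute $(\phi_t,\phi_{t'})$ from \eqref{2.2}, as the residue at $s=2$ of the Rankin--Selberg series of $\phi_t$ against $\phi_{t'}$ assembled from $L(s,E_2)=\zeta(s)\zeta(s-1)$; isolating the coefficient of $(s-2)^{-1}$ of the double pole is the delicate computation. Unlike the generic entries $m(t,t')$, the resulting entries are \emph{not} multiplicative in $t,t'$ --- morally because passing to the differences $\phi_t$ and discarding the divisor $t=1$ destroys the tensor structure over the primes of $N$ --- so the determinant genuinely couples distinct primes. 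This coupling is what I expect to force the determinant to vanish precisely under $p^2q\mid N$ (now allowing $p=2$) and under "$q$ not a primitive root mod $p$", while keeping it nonzero for $N$ prime, where the block is the single entry $(\phi_N,\phi_N)$. Performing this evaluation and matching its vanishing locus exactly to the stated arithmetic conditions is the step I expect to require the most effort.
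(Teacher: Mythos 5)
Your strategy coincides with the paper's: reduce to the Eisenstein blocks $M_{\psi,\phi}$, get part a) from \eqref{4.1} together with the simplicity of the zero of $L(s,\phi\ov{\psi})$ at $s=2-k$ and Corollary \ref{c1}a), and get the degeneracy assertions in part b) by exhibiting a singular block via Corollary \ref{c1}b). Those portions are correct, and in two places you are even more explicit than the paper: the orthogonal splitting $M_k=S_k(\Gamma)\oplus\EE_k(\Gamma)$ is taken for granted there, and your restriction to odd $p$ in the $p^2q$ case flags a point the paper passes over silently (there is no primitive character of conductor $2$).

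The genuine gap is the nondegeneracy claim for $N$ prime. There the only remaining issue is the exceptional block $(k,\psi,\phi)=(2,{\bf 1},{\bf 1})$, i.e.\ the single entry $(E_2^N,E_2^N)$ with $E_2^N(z)=E_2(z)-NE_2(Nz)$, and you explicitly defer this evaluation (``the step I expect to require the most effort'') instead of performing it; so the prime case is not proved in your proposal. The paper does carry it out, and it is short: $L(s,E_2^{{\bf 1},{\bf 1},N})=\zeta(s)\zeta(s-1)(1-N^{1-s})$ still has an Euler product, so Shimura's lemma applies exactly as in the generic case and gives \eqref{4.4}; one checks $m_2(t,t')=0$, so the Rankin--Selberg series has only a \emph{simple} pole at $s=2$, with residue proportional to $m'(N,N)=\frac{N-1}{N+1}\ln N\neq 0$. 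Relatedly, your forecast of what this computation yields is miscalibrated: you expect the exceptional block's determinant to vanish ``precisely'' under the conditions of the theorem, but the paper's final theorem shows it is singular much more often (for square-free $N$ the rows indexed by $p$, $q$ and $pq$ are linearly dependent, so it degenerates for every square-free $N$ with at least two prime factors). This does not damage your degeneracy claims---one singular block suffices---but it means the exceptional block cannot carry the weight you assign to it, in particular for the deferred case $4q\mid N$. Finally, your assertion that ``$q$ not a primitive root mod $p$'' produces an \emph{even} nontrivial $\chi$ mod $p$ with $\chi(q)=1$ actually requires $\langle q,-1\rangle\neq(\Z/p\Z)^*$, not merely $\langle q\rangle\neq(\Z/p\Z)^*$ (e.g.\ $p=7$, $q=2$ fails); the paper's own proof elides the same parity point, so this is a shared defect rather than a new one.
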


\begin{proof} Part a) was already proved above. 

For part b), assume $p^2q|N$. We take $\psi=\phi$ characters of conductor $p$. Then 
$\psi(q)=\phi(q)\ne 0$, and Corollary \ref{c1} shows that $\det
M_2^{\psi,\phi}(N/p^2)=0$, so the Petersson product is degenerate. 

Assuming $pq|N$ with $p,q$ as in the statement, it follows that there is a primitive
character  $\psi$ mod $p$ with $\psi(q)=1$. Taking $\phi={\bf 1}$, Corollary \ref{c1} shows
that $\det M_2^{\psi,\phi}(N/p)=0$, so the Petersson product is degenerate. 

When $N=p$ is prime, the $L$-function $L(s, E_2^{{\bf 1}, {\bf
1},p})=\zeta(s)\zeta(s-1)(1-\frac{1}{p^{s-1}})$ has an Euler product. Then
$\res_{s=k}L(s, E_2^{{\bf 1}, {\bf 1},p}, \ov{E_2^{{\bf 1}, {\bf 1},p} })$ can be
computed as before, and it is nonzero. Also if $\psi$ is a primitive character of conductor
$p$, $R_{\psi, {\bf 1}}\ne 0$ with the notation of \eqref{4.1}, so the Petersson product is
nondegenerate in this case.
\end{proof}

Note that part a) implies that the Petersson product is nondegenerate on
$M_k(\Gamma_0(N))$ for $k>2$. To investigate what happens for $k=2$ and $\Gamma=\Gamma_0(N)$,
we now consider the case $k=2, \psi=\phi={\bf 1}$. Denote $E_2=E_2^{{\bf 1}, {\bf 1}}$, and
for $t>1$ let $E_2^t(z)=E_2(z)-tE_2(tz)$. We have 
\[L(s,E_2)=\sum_{n\ge 1} \frac{a(n)}{n^s}=\zeta(s) \zeta(s-1), 
\] 
with $a(p)=1+p$ for $p$ prime, and $L(s, E_2^t)=\sum_{n\ge 1} \frac{a(n)-t a(n/t)}{n^s}$. It
follows that  $L(s, E_2^t, E_2^t)$ is a sum of four Rankin $L$-functions with an Euler
product, and we have as before
\begin{equation}\label{4.4}
L(s, E_2^t, E_2^{t'})=\frac{\zeta(s)\zeta(s-1)^2 \zeta(2-s)}{\zeta(2s-2)} \cdot m_s(t,t')
\end{equation}
where, after writing $t=dr, t=dr'$ with $d=(t,t')$, $y_p=p^{1-s}$ , we have 
\[
m_s(t,t')=1+ \frac{tt'}{(drr')^{s}} \prod_{p^e||rr'}
\frac{X_p(e,s)}{1-y_p^2} - t^{1-s}  \prod_{p^e||t}
\frac{X_p(e,s)}{1-y_p^2} - t'^{1-s}  \prod_{p^e||t'}
\frac{X_p(e,s)}{1-y_p^2}
\]
with $X_p(e,s)$ as in \eqref{4.0} with $\alpha=\beta=1, \alpha'=\beta'=p$.

\begin{theorem} Let $\Gamma=\Gamma_0(N)$ with $N>1$ square-free. Then the Petersson product is
degenerate on $M_2(\Gamma)$, unless $N$ is prime when it is nondegenerate. 
\end{theorem}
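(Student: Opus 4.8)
The plan is to reduce the statement to a rank computation for a single, explicitly computable matrix. Since the extended product restricts on $S_2(\Gamma)$ to the classical positive-definite Petersson product, and cusp forms are orthogonal to Eisenstein series for it (for $f$ cuspidal all $a_0(f|\sigma_\a)$ vanish, so $F_{f,g}=0$ and the now-convergent integral $\int_\D f\overline{g}\,y^2d\mu$ vanishes by unfolding), nondegeneracy on $M_2(\Gamma_0(N))$ is equivalent to nondegeneracy on $\EE_2(\Gamma_0(N))$. For $\Gamma_0(N)$ the nebentypus is trivial, so we need $\psi\phi={\bf 1}$ with $\psi,\phi$ primitive and $c_\psi c_\phi\mid N$; but $\phi=\overline\psi$ forces $c_\phi=c_\psi$, hence $c_\psi^2\mid N$, and squarefreeness of $N$ leaves only $\psi=\phi={\bf 1}$. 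Thus $\EE_2(\Gamma_0(N))$ is spanned by the $2^{\omega(N)}-1$ series $E_2^t$ ($t\mid N$, $t>1$), where $\omega(N)$ is the number of distinct primes dividing $N$, and everything reduces to showing that $M_{{\bf 1},{\bf 1}}=[(E_2^t,E_2^{t'})]_{t,t'}$ is nonsingular exactly when $N$ is prime.

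Next I would compute these entries from \eqref{2.2} and \eqref{4.4}. At $s=k=2$ the factor $\zeta(s-1)^2$ gives a double pole, while $G(s):=\zeta(s)\zeta(2-s)/\zeta(2s-2)$ is regular with $G(2)=\zeta(0)=-\tfrac12$. Writing $y_p=p^{1-s}$, for squarefree arguments \eqref{4.0} gives $X_p(1,s)/(1-y_p^2)=(1+p)/(1+y_p)=:f_p(s)$, with $f_p(2)=p$. Substituting this into $m_s(t,t')$ and using $tt'=d^2rr'$ shows that the three nonconstant terms each tend to $1$, so $m_2(t,t')=1+1-1-1=0$ identically. Hence the double pole drops to a simple one, and $\res_{s=2}\big[G(s)\zeta(s-1)^2 m_s(t,t')\big]=G(2)\,\mu(t,t')$ with $\mu(t,t'):=\partial_s m_s(t,t')|_{s=2}$; consequently $M_{{\bf 1},{\bf 1}}$ equals a nonzero constant times $[\mu(t,t')]$.

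The main computation is then $\mu(t,t')$. Logarithmically differentiating the three nonconstant terms at $s=2$ (using $\partial_s\log f_p|_{s=2}=\tfrac{\log p}{p+1}$), the $\log t,\log t'$ contributions cancel against $\log d$ and the prime sums collapse, leaving
\[
\mu(t,t')=\sum_{p\mid (t,t')}\frac{p-1}{p+1}\log p .
\]
The decisive feature is that this depends on $t,t'$ only through their common prime divisors: setting $c_p=\tfrac{p-1}{p+1}\log p>0$ and letting $v_p$ be the vector indexed by the divisors $t>1$ with $v_p(t)=1$ if $p\mid t$ and $0$ otherwise, we obtain $[\mu(t,t')]=\sum_{p\mid N}c_p\,v_pv_p^{T}$, a matrix of rank at most $\omega(N)$.

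Finally I would conclude by a dimension count. The matrix has size $2^{\omega(N)}-1$ but rank at most $\omega(N)$, and $2^{\omega(N)}-1>\omega(N)$ precisely when $\omega(N)\ge 2$; so $M_{{\bf 1},{\bf 1}}$ is singular whenever $N$ has two distinct prime factors, giving degeneracy. When $N=p$ is prime the matrix is the single entry $c\cdot\mu(p,p)=c\cdot\tfrac{p-1}{p+1}\log p\ne0$, so the product is nondegenerate, matching the stated criterion. I expect the only delicate point to be the residue bookkeeping of the second step—verifying the exact cancellation $m_2(t,t')=0$ that turns the double pole into a simple pole—whereas the rank/dimension argument of the last step is the clean structural reason for the degeneracy.
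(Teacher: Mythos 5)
Your proposal is correct and follows essentially the same route as the paper: reduce to $\psi=\phi=\mathbf{1}$ using squarefreeness of $N$, observe that $m_2(t,t')=0$ so the double pole of $\zeta(s-1)^2$ drops to a simple pole with residue $\zeta(0)\,\partial_s m_s|_{s=2}$, and compute $\partial_s m_s(t,t')|_{s=2}=\sum_{p\mid (t,t')}\frac{p-1}{p+1}\log p$, exactly as in the paper. The only difference is in the final linear-algebra step, and it is cosmetic: where you bound the rank by $\omega(N)$ via the decomposition $\sum_{p\mid N}c_p\,v_pv_p^{T}$, the paper exhibits the explicit row dependence (row $p$ plus row $q$ equals row $pq$); both give singularity precisely when $\omega(N)\ge 2$ and nondegeneracy for $N$ prime.
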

\begin{proof}
A basis for the space $\EE_2(\Gamma)$ consists of the Eisenstein series $E_2^{\psi, \phi, t}$
with $c_\phi=c_\psi=c$, $\psi\phi=1_c$ (the principal character of conductor $c$), and
$t|(N/c^2)$, so when $N$ is square-free, only the case $\psi=\phi={\bf 1}$ is possible. Since
$\frac{X_p(1,s)}{1-y_p^2}=p \gamma_s(p)$, with $\gamma_s(p)= \frac{1+p^{-1}}{1+p^{1-s}}$,
setting $\gamma_s(u)=\prod_{p|u}\gamma_s(p)$ for $u$ square-free and $\gamma_s(1)=1$, we have
\[
m_s(t,t')=1+(drr')^{2-s}\gamma_s(r)\gamma_{s}(r') - (dr)^{2-s} \gamma_s(r)\gamma_s(d)-
(dr')^{2-s} \gamma_{s}(r')\gamma_s(d)
\]
Since $\gamma_2(p)=1$ for every $p$, it follows that $m_2(t,t')=0$, so the $L$-function
\eqref{4.4} has a simple pole at $s=2$ with residue equal to $\zeta(0)$ times the quantitity
$$m'(t,t')=\frac{dm_s(t,t')}{ds}\big|_{s=2}.$$ Therefore the Petersson product is nondegerate
if and only if the matrix $M(N)$, with entries $m'(t,t')$ indexed by
divisors $t, t'$ of $N$ with $t,t'>1$, is nonsingular. 

Since $\frac{d\gamma_s(p)}{ds}\big|_{s=2}=\frac{\ln(p)}{1+p}$ and $\gamma_2(p)=1$, we have
$\frac{d\gamma_s(u)}{ds}\big|_{s=2}=\sum_{p|u} \frac{\ln(p)}{1+p}$ for $u$ squarefree, and
\[
m'(t,t')=\begin{cases}
          0 & \text{ if }  (t,t')=1 \\
			 \sum_{p|d} \frac{p-1}{p+1}\ln(p) &  \text{ if } d=(t,t')>1. 
         \end{cases}
\]
Then the lines indexed by primes $p\ne q$ add up to the line indexed by $pq$, showing that the
determinant is 0, so the pairing is degenerate unless $N=p$ is prime.
\end{proof}

\noindent\textbf{\small Acknowledgments.} The first author was partially supported by the
CNCSIS grant PD-171/28.07.2010. The second author was partially supported by the European
Community grant PIRG05-GA-2009-248569.

\end{document}